\documentclass[10pt]{article}
 \usepackage{amsmath,amsthm,amscd}
 \usepackage{xcolor}
 \usepackage[all]{xy}% for diagrams
 	 \usepackage[stix2]{newtxmath} 	    % substitute  stix2-fonts for newtxmath gives nicer "v"
 	 \usepackage[cal=boondoxo]{mathalfa} % mathcal
	  \usepackage{physics} % for italic bold

 \usepackage[all]{xy}% for diagrams
 \usepackage{mathtools}
 
\usepackage[pagebackref]{hyperref} 
 \usepackage{graphicx}
 \usepackage{enumitem}
\newtheorem{thm}{Theorem}[section]
	\newtheorem*{thm*}{Theorem}
	
	\newtheorem*{lemma*}{Lemma}
	
	\newtheorem{prop}[thm]{Proposition}
	\newtheorem*{prop*}{Proposition}
	\newtheorem{corr}[thm]{Corollary}
	\newtheorem*{corr*}{Corollary}
	
	\newtheorem{obs}[thm]{\textbf{Observation}}
\theoremstyle{definition} 
	\newtheorem{dfn}[thm]{Definition}
	
	\newtheorem{modif}[thm]{\textbf{Modification}}
\theoremstyle{remark}

	\newtheorem{rmk}[thm]{\textit{Remark}}
	\newtheorem{exmple}[thm]{Example}
	
\def\half{\frac 12}

 \usepackage[cal=boondoxo]{mathalfa} % mathcal

\newcommand{\bC}{{\mathbb{C}}}

\newcommand{\bR}{{\mathbb{R}}}
\newcommand{\bZ}{{\mathbb{Z}}}
\def\ii{{\mbold i}}
\newcommand{\mbold}[1]{\vb*{#1}}

	\newcommand{\bY}{{\mbold{Y}}}

	\newcommand{\bo}{{\mbold{0}}}
	\newcommand{\bp}{{\mbold{p}}}

	\newcommand{\bx}{{\mbold{x}}}

	\newcommand{\bz}{{\mbold{z}}}
 \newcommand\cL{{\mathcal L}}
\def\mapright#1{\mathop{\vbox{\ialign{
                                ##\crcr
    ${\scriptstyle\hfil\;\;#1\;\;\hfil}$\crcr
 \noalign{\kern2pt\nointerlineskip}
    \rightarrowfill\crcr}}\;}}
    \def\mapleft#1{\mathop{\vbox{\ialign{
                                ##\crcr
    ${\scriptstyle\hfil\;\;#1\;\;\hfil}$\crcr
 \noalign{\kern2pt\nointerlineskip}
    \leftarrowfill\crcr}}\;}}
\def\into{\hookrightarrow}

\def\comp{\raise1pt\hbox{{$\scriptstyle\circ$}}} % composition
\def\del#1#2{\frac{\partial #1}{\partial #2}} % partial differentiation
\renewcommand\setminus{-} % setminus is not skew
\def\set#1{\{ #1\}} 
\def\sett#1#2{\{ #1 \mid  #2 \}}  
\def\jac#1{\mathsf{Jac}_{#1}} % jacobian ring
\def\mf#1{\mathsf F_{#1}} % Milnor fiber of #1
\def\mt#1#2{\mathsf{T}_{#1}{(#2)}} % Milnor tube
\def\omt#1#2{\mathsf{T}^*_{#1}{(#2)}} %open Milnor tube
 \def\cmt#1#2{\widehat{\mathsf{T}^*_{#1}{(#2)}} }% completed Milnor tube
\def\lnk#1{\mathsf L_{#1}} % Link of #1
\def\cyl#1{\mathsf{Cyl}{(#1)}}

\begin{document}

\title{On symplectic aspects of $\mu$-equivalence  of  a class of isolated hypersurface singularities}
\author{Chris Peters}
 \date{ }
\maketitle

\abstract{  A $\mu$-constant deformation of isolated complex hypersurface singularities of dimensions different from 2 preserves the diffeomorphism class of the Milnor fiber. 
If it would be known that   $\mu$-constant deformations  have no vanishing folds 
as defined  in \cite{Oshea2}, this result is even  true in all dimensions since
the members in the family would have a common Milnor ball-radius in the sense of \S~\ref{sec:top}.

In this note a symplectic version of this  is shown which in particular applies to weighted homogeneous polynomial singularities.
%The method of proof also gives    a symplectic version of the well known result that analytically equivalent hypersurface singularities have diffeomorphic Milnor fibers 
%and   links (in all dimensions). 
}

\section{Introduction}
\label{sec:intro}

In recent times a fruitful interplay between the algebra geometric and symplectic aspects of isolated hypersurface singularities took off, 
perhaps spurred by the influential paper \cite{mclean} by  McLean. 
Another striking example of this interplay is the paper  \cite{boba} by de Bobadilla and Pe{\l}ka where they solve the
 Zariski conjecture stating that isolated hypersurface singularities with the same Milnor number have the same 
multiplicity. See also  the simplification  by \cite{auy} which uses symplectic geometry in a different, more effective way.

 This note is just a  modest observation on   deformations of isolated hypersurface singularities with fixed Milnor number
  assuming these  have a  common Milnor  radius, or, equivalently, assuming there are no   so-called  vanishing folds, 
 as  introduced by O'Shea in \cite{Oshea2} and whose definition is recalled in  \S~\ref{ssec:isodefo}. 
 This immediately    implies that the Milnor fibers of the deformations are differentiably isomorphic.
 Since the  occurrence  or absence of vanishing folds seems a hard problem, L\^e and  Ramanujam circumvented this.
 With a  different approach  they showed   
  in  \cite{LeRam} that  the members of a family with constant Milnor number are 
 diffeomorphic, at least in dimensions   different from 2. 
  \par
  Here the symplectic version is investigated under the hypothesis that there are no vanishing folds, 
  which leads to the main result in this note which reads as follows.
  
\begin{thm*}[=Theorem~\ref{thm:MuConstResult} and Corollary~\ref{cor:MuConst}]
 The  completed Milnor fibers of a family  of  isolated (polynomial) hypersurface singularities without vanishing folds 
 are   mutually symplectomorphic,  and the links of the corresponding singularities are mutually contactomorphic.
   This is in particularly true for families of quasi-homogeneous polynomials with constant Milnor numbers.
  \end{thm*}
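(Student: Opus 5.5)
The plan is to use the absence of vanishing folds to fix one ball for the whole family, and then build a Liouville-type isotopy over the parameter space by the Moser method. Let $f_t$, $t\in[0,1]$, be the family with $f_t(\bo)=0$ and constant Milnor number. By hypothesis, or equivalently by the absence of vanishing folds in the sense of O'Shea, there is a common Milnor radius $\epsilon>0$. This means that for every $t$ the sphere $S_\epsilon$ and all smaller spheres $S_{\epsilon'}$, $0<\epsilon'\le\epsilon$, meet $f_t^{-1}(0)$ transversally, and likewise meet the nearby fibres $f_t^{-1}(\delta)$ for $0<|\delta|\le\eta$ with a uniform $\eta$.

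The first step is to show that the Milnor tubes $\mt{t}{\epsilon,\eta}=B_\epsilon\cap f_t^{-1}(D^*_\eta)$ form a smooth locally trivial family over $[0,1]$. I will use the standard vector field argument. Uniform transversality gives a vector field on the total space $\bigcup_t \{t\}\times B_\epsilon\cap f_t^{-1}(D_\eta)$ that lifts $\partial_t$, preserves $f_t$ (so fibres go to fibres), and is tangent to $S_\epsilon$. Compactness of $[0,1]$ makes the uniform $\epsilon,\eta$ available. In particular the Milnor fibres $\mf{t}=B_\epsilon\cap f_t^{-1}(\delta)$ are all diffeomorphic, and the links $\lnk{t}=S_\epsilon\cap f_t^{-1}(0)$ are isotopic inside $S_\epsilon$.

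The second step upgrades this to symplectic and contact statements. Each $\mf{t}$ carries the restriction of the standard form $\omega=\frac{\ii}{2}\sum dz_j\wedge d\bar z_j$, which has the primitive $\lambda=\frac12\sum(x_jdy_j-y_jdx_j)$. Its Liouville vector field is radial, so by transversality it points outward along $\partial \mf{t}$. Hence each $\mf{t}$ is a Liouville domain, and its completion $\cmt{t}{}$ is obtained by attaching the cylindrical end $\partial\mf t\times[1,\infty)$. The family $(\mf t,\lambda|_{\mf t})$, pulled back to a fixed $\mf 0$ via the trivialization of step one, is a smooth family of Liouville domain structures. The standard result is that a deformation of Liouville domains induces exact symplectomorphisms of the completions (Gray–Moser plus the cylindrical end, as in Seidel or McLean). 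Applying it gives $\cmt{0}{}\cong\cmt{1}{}$. Similarly, $\lambda$ restricts to a contact form on each $\lnk t$, because the radial field is transverse to $\lnk t$ inside the Stein manifold $f_t^{-1}(0)\setminus\bo$, equivalently by Milnor's transversality. This gives a smooth family of contact structures on a fixed manifold, and Gray's theorem yields contactomorphisms $\lnk0\cong\lnk1$.

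The final step is the quasi-homogeneous case. I will argue that a family of weighted homogeneous polynomials with constant $\mu$ has fixed weights, since $\mu=\prod(w_j^{-1}-1)$ and the weights are determined by the Newton-type data. For fixed weights the $\bC^*$-action with positive weights shows that $f_t^{-1}(0)$ is transverse to every level of a weighted radius function $\rho=\sum|z_j|^{2/w_j}$ in a uniform way. The plan is to work with $\rho$-balls instead of round balls, or to check directly that there are no vanishing folds, using the fact that the gradient of $\rho$ and the Euler field are nowhere orthogonal. The main obstacle I expect is in step one: making the trivialization respect the corners of $\mt{t}{}$ near $S_\epsilon\cap f_t^{-1}(\partial D_\eta)$, and ensuring the Liouville field stays outward-pointing uniformly in $t$. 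I would handle this by rounding corners uniformly using the common $\epsilon,\eta$.
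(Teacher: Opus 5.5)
Your argument is correct in substance, but it follows a different route from the paper.

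\textbf{Comparison of the two routes.} The paper does not first trivialize the family smoothly. It puts the exact form $\omega+\frac14 d(s\,d\bar s-\bar s\,ds)$ on the total space over the two-dimensional base $\Delta^*(r)\times\Delta(\delta)$. It then modifies this form in a collar of the horizontal boundary, as in Keating's construction, so that the Liouville flow preserves the symplectic horizontal distribution. The symplectomorphisms of completed fibres come from symplectic parallel transport, and the contact statement is inherited from Keating's proposition.

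You use the common Milnor radius only to get an Ehresmann trivialization of the closed fibres $\overline B_\epsilon\cap f_t^{-1}(\delta)$ along a path. These fibres form a smooth one-parameter family of Liouville domains on a fixed compact manifold. You then quote invariance of completions under Liouville deformation, and Gray stability for the closed links.

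Your route is more elementary: no modified connection form is needed, you get \emph{exact} symplectomorphisms, and the contactomorphism of links across the deformation parameter is explicit. In the paper's proof that last point is only implicit. The paper's route gives more structure: a symplectic fibration over the whole parameter space. Its identifications therefore come from parallel transport and are compatible with the symplectic monodromy of the individual Milnor fibrations.

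\textbf{Details to correct.}
\begin{itemize}
\item The Liouville field of $\lambda|_{\mathsf F_t}$ is not the radial field, which is not tangent to $\mathsf F_t$. It is the orthogonal projection of the radial field to $T\mathsf F_t$, i.e.\ the gradient of $\frac14\|z\|^2|_{\mathsf F_t}$ for the induced K\"ahler metric. It points outward along $\partial\mathsf F_t$ precisely because of transversality, and the same holds for the links.
\item Your worry about corners is unnecessary, since you only use individual fibres, which are smooth manifolds with boundary.
\item In the quasi-homogeneous step, $\mu=\prod(w_j^{-1}-1)$ goes from weights to $\mu$, not back. For example, $x^3+y^3$ and $x^2+y^5$ both have $\mu=4$. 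So your deduction of fixed weights is invalid; the paper simply assumes fixed weights and degree and cites O'Shea.
\item Switching to $\rho$-balls is also a bad move. In general $\rho$ is not smooth, and the standard Liouville field need not be transverse to its levels.
\end{itemize}

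\textbf{A direct argument for the quasi-homogeneous case.} It works with round spheres and needs no fixed weights. Take each member separately, with its own weights normalized to degree $1$. Its real Euler field $E=\sum_j w_j(x_j\partial_{x_j}+y_j\partial_{y_j})$ satisfies $df_t(E)=f_t$, so it is tangent to $V(f_t)\setminus\{0\}$. At the same time $d\|z\|^2(E)=2\sum_j w_j|z_j|^2>0$. Hence every round sphere is a Milnor sphere for every member. There are no kinks, hence no vanishing folds, and your first step applies directly.
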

 
 The notions mentioned in this theorem will be explained in \S~\ref{ssec:bassymp} after having recalled
 the classical results on Milnor fibers of isolated hypersurface singularities   in \S~\ref{sec:top}.   

The   proof  of  Theorem~\ref{thm:MuConstResult} given    in \S~\ref{ssec:muequiv}  depends on results of A.~Keating in \cite[\S9.1]{keat}
which are stated here as Proposition~\ref{prop:keat}\footnote{See  Remark~\ref{rmk:onsympfib}(1)  for
a construction in \cite{boba}  related to  Keating's.}.

A direct generalization of the L\^e--Ramanujam approach  seems hard since, -- as explained in Remark~\ref{rmk:hCobord} --  they make essential use of the $h$-cobordism theorem for which a symplectic version is  missing.

\subsection*{Notation and conventions}
\begin{small}
 I use  boldface for vectors in $\bR^n$. Furthermore the $n$ ball of radius $r$ is denoted 
  $  
B^n(r)   =\sett{\bx\in \bR^n}{\|\bx\|<r }$ and its boundary by
$S^{n-1}(r) = \sett{\bx\in \bR^n}{\|\bx\|= r  }$.  The superscripts $n, n-1$ will be dropped if no confusion is likely to arise.
 Moreover, I'll set  $\Delta(r)=\sett{z\in \bC}{ \|z\| <r}$ and  $\Delta^*(r)=\Delta(r)\setminus \set{0}$.
 
 If $X$ is a smooth manifold, $T(X)$ is its tangent bundle and for  $x\in X$ the vector space $T_x(X)$ is the tangent space at $x$.
 \end{small}

\subsection*{Acknowledgements}
\begin{small} Thanks to Duco van Straten for pointing out the recent results \cite{boba,auy}. 
Thanks also to Federica Pasquotto and Nikolas Adaglou for several clarifying 
discussions. 
\end{small}

\section{On the topology of  isolated hypersurface singularities}
  \label{sec:top}
 
 \subsection{The Milnor fibration}
 
%\begin{small} 
%In this section  some well-known results are recalled concerning  the topology of isolated  hypersurface singularities.
%This goes back to Milnor's work \cite{milnorbook}; similiar results in the more general setting of 
% isolated singularities which are not necessarily of hypersurface type can be found in Looijenga's book~\cite{looij}
% and in the more recent volume ~\cite{seade}  by  L\^e et al.
%\end{small}

In this note, an  isolated hypersurface singularity wil be given by a polynomial function  $f: U  \to \bC$ defined
on an open neighborhood $U$ of the origin $\bo$ in $\bC^{m+1}$ such that $f(\bo)=0$ and for which $\bo$ is the only possible critical point of $f$ in $U$. The hypersurface 
$$V(f)=\sett{\bx\in U}{f(\bx)=0}
$$ is then said to have $\bo$ as an isolated singularity.
 
Milnor has shown~\cite[Corollary 2.9]{milnorbook} that there exists  $\epsilon >0$ such that the spheres $S^{2m+1}(\epsilon')$, 
$0<\epsilon'\le \epsilon $
meet $V(f)$ transversally.   Such a     sphere   $S^{2m+1}(\epsilon)$  is called a \textbf{\emph{Milnor sphere}}.  
The   number  $ \epsilon $ will be  called a \textbf{\emph{Milnor radius.}}  Because transversality is an open condition, there are  always  Milnor spheres with radius larger 
than $\epsilon$. 
Set
\[
\epsilon_f= \sup _\epsilon\set{S^{2m+1}(\epsilon) \text{ is a Milnor sphere.}}
\]
The sphere $S^{2m+1}(\epsilon)$ being a Milnor  sphere  is equivalent to  the property that  the
 (euclidean) norm  $ \|- \|^2|_{V(f)}$   has  no critical points  $\not=\bo$ in the ball whose boundary is 
$S^{2m+1}(\epsilon)$, that is,
\begin{equation}
\label{eqn:lnk}
 (\overline B^{2m+2}(\epsilon)  \setminus \set {\bo})\cap V(f)  \mapright{   \|\bx\|^2}( 0,\epsilon^2]
\end{equation} 
has no critical points Consequently  the fibers $S^{2m+1}(\epsilon')\cap V(f)$   for $0<\epsilon'\le \epsilon$  are smooth $2m-1$-dimensional manifolds.  By Ehresman's theorem~\cite{ehres} these  are   diffeomorphic to each other and represent  the \textbf{\emph{link}} of  the singularity defined by $f$,
denoted 
\begin{equation}
\label{eqn:link}
\lnk f: =  V(f )   \cap S^{2m+1}(\epsilon' ).
 \end{equation}
If  $ \epsilon_f<\infty $ there are points $\bx\in S(\epsilon_f )\cap V(f)$ for which $\|\--\|^2$ restricted to $V(f)$ are critical. Such 
points $\bx$ are called \emph{\textbf{kinks}}  for $f$. These  will play a central role later.

The fibers $f^{-1}(z)$ near $0$ lead to the \textbf{\emph{Milnor fibration}} which appears in two guises. The one originally introduced by Milnor
in \cite[\S 4]{milnorbook} is given by the differentiable map
$  f/|f| :  S^{2m+1}(\epsilon' )   \setminus   \lnk f   \to S^1 $
which in loc. cit. is shown  to be a locally trivial fibration. Its typical fiber   $\overline {\mf f } $ is called the  closed \textbf{\emph{Milnor fiber}} of $f$.
Its interior $\mf f$ figures in a natural way in the second approach, due to  L\^e (cf. \cite{le}), the one which will be used here. It is the fibration given by $f$ itself near the singular fiber of $0$.
More precisely, one defines:

\begin{dfn} Let $\epsilon$ be a Milnor radius for the   hypersurface   $V(f)$ with isolated hypersurface singularity at $\bo$
and let $r>0$ be such that there are no kinks in  ${f^{-1} \Delta^*(r)}$ .

 \begin{enumerate}[label=(\arabic*),leftmargin=*]
\item The \textbf{\emph{Milnor tube}} $\mt{f} {\epsilon,r}$ is the open subset set of $\bC^{m+1}$ given by
$$
\mt{f} {\epsilon,r}:= {B^{2m+2}(\epsilon)}   \cap  f^{-1} \Delta  (r).
$$
\item  The \textbf{\emph{horizontal boundary of the Milnor tube}}   is 
$$
\partial^{\rm hor} \mt{f} {\epsilon,r}:= S^{2m+1}(\epsilon)    \cap   f^{-1} \Delta(r).
$$
\item
Setting  $\omt{f}{ \epsilon, r} :=  \mt{f} {\epsilon,r} \setminus f^{-1}(0)\cap B^{2m+2}(\epsilon)$,
the fibration
$$
 \omt{f}{ \epsilon, r} \mapright{\, f \,}  \Delta^*(r)
$$
is called the  \textbf{\emph{Milnor fibration}}.% and $r$ the \textbf{\emph{Milnor tube-radius}}.
\end{enumerate}

\end{dfn}

The absence of critical values    of  $f$ restricted to $\omt{f}{ \epsilon, r}$  implies that
by    Ehresman's theorem 
$f$ is the projection of  a differentiably locally trivial fiber bundle over the punctured disc $\Delta^*(r)$.

%Furthermore,   the definition of a Milnor sphere implies that $V(f)=f^{-1}(0)$ meets  $ S^{2m+1}(\epsilon)$ transversally and since
% transversality is an open condition this holds also for $f^{-1}(z)$ for $\|z\|$ small enough. 
 The horizontal boundary of the Milnor tube  contains the link $\lnk f$ of the singularity. Again by  Ehresman's theorem, 
 $f:\partial^{\rm hor} T(\epsilon,r)\to \Delta(r)$ is a differentiably locally trivial fiber bundle, this time over   the entire  disc $\Delta (r)$.
This implies the following result.

\begin{thm}[\protect{\cite{le}}] \label{thm:minorLe} Let $(V(f),\bo)$ be an isolated hypersurface singularity  in  $\bC^{m+1}$. 
Then \label{thm:AltMF}
 
\begin{enumerate}[label=(\arabic*),leftmargin=*,nosep]
\item  % Restricting $f$ to  $ {B^{2m+2}(\epsilon)}   \cap  f^{-1} \Delta^ *(r)$, the complement of the fiber $f^{-1}(0)$ in the   Milnor tube,  one
The Milnor fibration is a $C^\infty$ locally   trivial  bundle,  with fibers  
  \begin{equation}\label{eqn:MilnFib}
  \mf{f,t}:= f^{-1}(t)\cap B(\epsilon),\quad t\in \Delta^*(r),
  \end{equation}
  diffeomorphic to the (open) Milnor fiber $\mf f$.  
  \item The    horizontal boundary of the Milnor tube is the total space of  a   smooth trivial fiber bundle,
  $$
  \partial^{\rm hor} \mt{f} {\epsilon,r}  \mapright{\,f\,}  \Delta(r)
  $$
  whose  fibers 
 \begin{equation}
 \label{eqn:Links}
 \lnk {f,t}:= f^{-1}(t)\cap S(\epsilon),\quad t\in  \Delta ( r)
  \end{equation} 
   are  diffeomorphic to the  link $\lnk f$.
\end{enumerate}
  \end{thm}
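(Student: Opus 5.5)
The plan is to apply Ehresmann's fibration theorem in its version for manifolds with boundary, that is, for proper submersions whose restriction to the boundary is also a submersion. The main technical input is a careful choice of $r$ relative to $\epsilon$. I fix a Milnor radius $\epsilon$ and work on the compact set $\overline B^{2m+2}(\epsilon)\cap f^{-1}\overline\Delta(r')$ for $r'<r$. I then show three things. First, $f$ has no critical points on $B(\epsilon)\setminus f^{-1}(0)$. Second, for every $t$ with $|t|<r$ the fiber $f^{-1}(t)$ meets $S^{2m+1}(\epsilon)$ transversally. Third, $f$ restricted to $\overline B(\epsilon)\cap f^{-1}\Delta^*(r)$ is proper onto $\Delta^*(r)$. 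The first holds because $\bo$ is the only critical point of $f$ in $U$.

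The second point is the key step. By the choice of $\epsilon$, $V(f)$ meets $S(\epsilon)$ transversally. Equivalently, on $V(f)\cap S(\epsilon)$ the vectors $\overline{\operatorname{grad} f}$ and $\bx$ are $\bC$-linearly independent, which is the no-critical-point condition for $\|\bx\|^2$ restricted to $V(f)$. Linear independence is an open condition and $V(f)\cap S(\epsilon)$ is compact, so it persists on a neighbourhood of $V(f)\cap S(\epsilon)$ in $S(\epsilon)$. By compactness of $S(\epsilon)$, this neighbourhood contains $S(\epsilon)\cap f^{-1}\overline\Delta(r_0)$ for some $r_0>0$. The hypothesis that there are no kinks in $f^{-1}\Delta^*(r)$ is exactly this transversality along $S(\epsilon)\cap f^{-1}\Delta(r)$, so I may take $r\le r_0$. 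This gives transversality of $f^{-1}(t)$ to $S(\epsilon)$ for all $|t|<r$, including $t=0$.

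With these facts, $f\colon \overline B(\epsilon)\cap f^{-1}\Delta^*(r)\to\Delta^*(r)$ is a proper submersion of a manifold with boundary that is also a submersion on the boundary. Ehresmann's theorem for manifolds with boundary makes it a locally trivial fibration of manifolds with boundary. Restricting to interiors gives (1), with fibers $\mf{f,t}$. They are diffeomorphic to the Milnor fiber $\mf f$, which is by definition the typical fiber, or the interior of Milnor's $\overline{\mf f}$ via the standard comparison in \cite{le}.

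For (2), the restriction $f\colon S(\epsilon)\cap f^{-1}\Delta(r)\to\Delta(r)$ is a submersion by the transversality just established. It is proper over each closed subdisc because $S(\epsilon)$ is compact. Ehresmann's theorem therefore gives a locally trivial bundle over the contractible disc $\Delta(r)$, which is consequently trivial. The fiber over $0$ is $V(f)\cap S(\epsilon)=\lnk f$, so every $\lnk{f,t}$ is diffeomorphic to $\lnk f$. The main obstacle is the uniform transversality near $t=0$ in the second step. I handle it by the compactness and openness argument above, together with the no-kink assumption built into the choice of $r$.
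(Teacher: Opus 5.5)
Your argument is correct and follows the same route as the paper. The paper only says that Ehresmann's theorem applies to $f$ on the Milnor tube minus the central fiber and to $f$ on the horizontal boundary over the whole disc $\Delta(r)$, and it defers the comparison with Milnor's fiber to L\^e (and Seade, Prop.~6.2.9). Your version is more careful than that sketch: working on $\overline B(\epsilon)\cap f^{-1}\Delta^*(r)$ and using Ehresmann for manifolds with boundary, together with your compactness argument for uniform transversality to $S(\epsilon)$, supplies the properness that the paper leaves implicit when it applies Ehresmann to the non-compact open tube.
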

 The proof  for this theorem is also available in \cite{seade} where  it appears  as Proposition 6.2.9. 
 
 \medskip
 
 The topology of such   Milnor fibers has been investigated by Milnor  in \cite{milnorbook}:

 \begin{thm}\label{thm:milnfibs}
 In case   $f\in\bC[x_1,\dots,x_{m+1}]$  is a polynomial, for some number $\mu(f)\in \bZ_{\ge 0}$  the Milnor fiber  has the homotopy type of a wedge of $\mu(f)$ $m$--spheres. 
   In particular,  its middle (integral) homology $H_m(\mf  f,\bZ)$, $2m=\dim \mf  f$,  is a free $\bZ$-module of rank $\mu(f)$, 
   the \textbf{\emph{Milnor number}} of $(X,\bo)$.   In   algebraic terms
    $
  \mu(f) =\dim \jac f $,  
    where     
    $$ \displaystyle\jac f = \bC[\![x_1,\dots,x_{m+1}]\!]/ \left(\del f{x_1},\dots,\del f{x_{m+1}}\right)
    $$
    is called the  \textbf{\emph{jacobian ring}}  of $f$.  
    In particular, if  $m\ge 2$, the Milnor fiber is simply connected.
  \end{thm}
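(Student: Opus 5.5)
The plan follows Milnor's original Morse-theoretic route, adapted to the fiber $\mf{f,t}$ of Theorem~\ref{thm:minorLe}. There are three parts: the homotopy type, the homology computation, and the identification $\mu(f)=\dim\jac f$. Simple connectivity for $m\ge2$ then follows at once, since a wedge of $m$-spheres with $m\ge 2$ has trivial $\pi_1$ by van Kampen.

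\emph{Step 1 (homotopy type).} Fix $t\in\Delta^*(r)$ and consider the closure of $\mf{f,t}$ in $\overline B(\epsilon)$. This is a compact complex submanifold of dimension $m$ with boundary $\lnk{f,t}$. It is Stein: it is a closed analytic submanifold of the ball intersected with a level set. The function $\phi(\bx)=\|\bx\|^2$ is strictly plurisubharmonic on it. Perturb it to $\phi_a(\bx)=\|\bx-\mbold a\|^2$ for generic $\mbold a$ (or add a small generic linear term) to get a Morse function. Its restriction to a complex $m$-manifold has critical points of index $\le m$, by the standard Andreotti--Frankel argument: the Hessian of a strictly plurisubharmonic function is positive on at least an $m$-dimensional real subspace. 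Step 2 of Theorem~\ref{thm:minorLe} shows that the boundary behaves well, meaning $\phi$ has no critical points near $\lnk{f,t}$ and the gradient points outward. Hence $\overline{\mf f}$ is obtained from finitely many $0$-cells by attaching cells of dimension $\le m$, so $H_k(\mf f)=0$ for $k>m$.

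\emph{Step 2 (connectivity).} I would show that $\mf f$ is $(m-1)$-connected. Milnor does this by proving that the fiber of $f/|f|$ on $S(\epsilon)\setminus\lnk f$ is $(m-1)$-connected, using the cone structure of $V(f)\cap\overline B(\epsilon)$. An alternative is L\^e's induction via a generic linear projection $\ell:\mf{f,t}\to\bC$. That map has finitely many Morse critical points, and its generic fiber is the Milnor fiber of $f|_{\ell=0}$, a singularity in one dimension lower. So $\mf f$ is obtained from (generic fiber)$\times$disc by attaching $m$-handles, and induction on $m$ gives $(m-1)$-connectedness. Combining with Step 1, a finite $m$-dimensional CW complex that is $(m-1)$-connected is homotopy equivalent to a wedge of $m$-spheres. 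For $m\ge2$ this uses Hurewicz and Whitehead; for $m=1$ it uses the fact that a connected graph is a wedge of circles. This also shows $H_m$ is free of some finite rank $\mu$.

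\emph{Step 3 ($\mu=\dim\jac f$).} I expect this to be the main obstacle. First, $\mu$ equals the degree of the Gauss-type map $\bx\mapsto \operatorname{grad} f/\|\operatorname{grad} f\|$ on $S(\epsilon)$. Milnor gets this from the Euler characteristic of $\overline{\mf f}$: deform $f$ to a Morsification $f-\langle\mbold a,\bx\rangle$, which has nondegenerate critical points in the ball. The fiber is then built from a smooth fiber of a nearby regular level by attaching one $m$-handle per critical point, and the number of critical points equals the local degree of $\operatorname{grad} f$ at $\bo$. Second, because $\operatorname{grad}f$ is a holomorphic map germ with isolated zero, its local degree equals the multiplicity of the zero. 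That multiplicity is $\dim_\bC \bC\{\bx\}/(\partial f/\partial x_i)$, which can be computed via the Grothendieck residue or by counting preimages of a generic regular value, each with local degree $+1$ by holomorphicity. The delicate points are controlling the Morsification inside the fixed Milnor ball, and identifying the algebraic multiplicity with the number of preimages. I would cite the standard commutative-algebra fact that for a finite holomorphic germ $\bC^{m+1}\to\bC^{m+1}$ the number of preimages equals $\dim$ of the local algebra, and the ring $\bC[\![\bx]\!]$ may replace $\bC\{\bx\}$ by faithful flatness.
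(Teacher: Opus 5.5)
The paper gives no proof of this theorem; it quotes it from Milnor's book. Your outline is Milnor's route: Andreotti--Frankel to bound the cell dimension, then $(m-1)$-connectivity, then $\mu$ as the local degree of the gradient map, then local degree $=\dim\jac f$. So in approach you agree with what the paper relies on, and Step 1, the wedge-of-spheres argument and the algebraic half of Step 3 are fine. Two steps, however, fail as you wrote them.

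\emph{The L\^e-style induction in Step 2.} This alternative does not give $(m-1)$-connectivity. From $\mf f\simeq (G\times D)\cup e^m_1\cup\dots\cup e^m_\gamma$, with $G$ an $(m-2)$-connected wedge of $(m-1)$-spheres, you can only conclude that $\mf f$ is $(m-2)$-connected. To kill $\pi_{m-1}$ you need the attaching spheres (the vanishing cycles of $\ell$ in $G$) to generate $\pi_{m-1}(G)$, and the cell count says nothing about that. Already for $m=1$, $G$ is a finite set of points, and connectedness of a plane-curve Milnor fiber does not follow from how many $1$-cells are attached.

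\emph{The handle count in Step 3.} A Morsification $f_a$ does not build one regular fiber from another by attaching $m$-handles. All regular fibers of $f_a$ over the disc are diffeomorphic, so as stated your Euler characteristic count carries no information about $N$.

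The missing ingredient, which repairs both points at once, is the contractibility of the total space. For small generic $a$, the space $E_a=f_a^{-1}(\overline\Delta(r))\cap\overline B(\epsilon)$ is diffeomorphic to the Milnor tube of $f$. That tube deformation retracts onto the cone $V(f)\cap\overline B(\epsilon)$, so $E_a$ is contractible. Moreover, $E_a$ is obtained from a regular fiber $\mf f$ by attaching one $(m+1)$-cell, a Lefschetz thimble, for each of the $N$ critical points, that is, for each solution of $\partial f/\partial x_i=a_i$ in the ball.

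Consequently the pair $(E_a,\mf f)$ is $m$-connected, which gives $\pi_j(\mf f)=0$ for $j\le m-1$. It also gives $H_m(\mf f)\cong H_{m+1}(E_a,\mf f)\cong\bZ^N$, which is the correct version of your Euler characteristic relation $1=\chi(\mf f)+(-1)^{m+1}N$. Combined with Step 1, this yields the wedge of $N$ spheres directly, with $N$ the local degree of the holomorphic gradient, and no induction on dimension is needed. Your multiplicity argument then identifies $N$ with $\dim\jac f$.
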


\subsection{Isolated hypersurface singularities under deformations}

\label{ssec:isodefo}

\begin{small} In this section classical results \cite{LeRam} of L\^e--Ramanujam  and \cite{Oshea2} of O'Shea are discussed. Of  main interest here are the $\mu$-constant deformations.
\end{small}

\medskip

Depending on the allowed transformations of the
surrounding manifold there are various equivalence relations which apply to  the class of  isolated hypersurface singularities. For instance 
the  isolated hypersurface singularities $(V(f),\bo)\subset (U,\bo)$, $(V(g),\bo) \subset (U',\bo))$ ($U $  and  $U'$ open subsets of $ \bC^{m+1}$),  are 
said to be  \textbf{\emph{smoothly, respectively analytically equivalent}}, if
 for some smooth,  respectively holomorphic, embedding $\phi: (U,\bo) \into (U',\bo) $ one has $f= g\comp\phi $.  
 
 It need  not be the case that a family $\set{f_s=0}$ of  isolated hypersurface singularities,  depending smoothly on $s$ yields smoothly equivalent
 singularities. Also, in case $f_s$ depends analytically on $s$, it need  not be the case that the hypersurfaces $\set{f_s=0}$ are analytically equivalent.
 \begin{exmple}
 \label{exm:fold}
Consider the so-called  $2$-dimensional hyperbolic $T_{p,q,r}$-singularity  given by:
  \[
   f_a=x^p+y^q+z^r +a xyz,  \quad a\in \bC^\times , \quad  1/p+  1/q+  1/r <1.
  \]
  For all   $a\in \bC, a\not=0$, the polynomial   $f_a=0$  has  an isolated singularity at the origin whose 
  Milnor number  is known to be  equal to     $\mu(f)=p+q+r-1$. This is the case  since by \cite[\S 8]{arn2}  the   jacobian ring   
  is spanned  by $1$ together with the monomials $x^k$, $k=1,\dots, p-1$, $y^\ell$, $\ell=1,\dots, q-1$, $z^m$, $m=1,\dots, r-1$,
together with $xyz$. For  $a=0$ the Milnor number is equal to $(p-1)(q-1)(r-1)$ 
(a basis of the jacobian ring is given by the monomials $x^\alpha y^\beta z^\gamma$, 
$0\le \alpha\le p-2$, $0\le \beta \le q-2$, $0\le \gamma\le r-2$).  This is always larger than $p+q+r-1$ since $1/p+  1/q+  1/r <1$.
Still,  the family depends  smoothly on  $a\in\bC$.  

For $a\not=0$ the Milnor fibers and links deform smoothly with $a$.
  It is classical that the parameter $a$ 
  is a modulus, i.e. the complex structure of the singularity varies with $a$; 
  the example is one of Arnold's  unimodal singularities \index{singularity!unimodal ---}as discussed in \cite[Ch. 2.3]{singbook}.
\end{exmple}

Motivated by examples of this kind one may  introduce   weaker equivalences  which hold in particular for the members 
  $\set{f_a=0,a\in \bC\setminus \set{0}}$ of the above family.

 \begin{dfn}  1. The members of a  family  $\set{f_s=0}$   of isolated singularities    depending smoothly on $s$  are called 
 \textbf{\emph{deformation equivalent}} if  their  Milnor fibers  and  their links deform smoothly with $s$.
 \\
 2.  A smooth hypersurface  $V(F(\bz,s)) \subset \bC^{m+1} \times \Delta(r) $  depending polynomially on $\bz$ and analytically
 on $s$ and such that for all $s_0\in\Delta$ the hypersurface $V(F(\bz,s_0) )\subset \bC^{m+1}$ has an isolated singularity at $\bo$
 and such  that the Milnor numbers  for these hypersurfaces  are constant
 defines the $\mu$-constant deformation $\set{f_s}_{s\in \Delta(r)}$, $f_s=V(F(-,s))$.
  The corresponding  singularities are called  \textbf{\emph{$\mu$-equivalent}}.
\end{dfn}

Let me return to the present setting of  a family  of (germs of) hypersurfaces $V(f_s)$ depending \emph{polynomially} on $s$, all having an isolated
singularity at $\bo$. Assume  for simplicity that  the neighborhood $U\subset \bC^{m+1}$ of the origin is chosen so that for
all $s\in \Delta(r)$ the hypersurface $V(f_s)\cap U$ has no other singularities besides $\bo$ and there is a critical point $\bp _s\in V(f_s)$
of the function $\|\,\, \|^2_{V(f_s)}$ nearest to the origin, by definition a  \textbf{\emph{kink}} of $f_s$.
Then a Minor radius for $f_s$ is strictly smaller than $\|\bp(s_0) \|$.
There might be a sequence of kinks $\bp_s$ with $\lim_{s\to s_0} \bp_s=\bo $ and then in a neighbourhood of $s_0$ one cannot find a common
Milnor radius. By the curve selection lemma one then finds a   \textbf{\emph{vanishing fold}} 
centered at   the singular point  $(\bo, s_0)$ of the fibers of the family. 
By definition this is a real analytic arc 
$$
\gamma  :  [0,\delta)   \to U\times \Delta(s_0,r_0)\subset \bC^{m+1}\times\Delta(r),\,
 \gamma(u)=(\bx(u), s(u))\in (V(f_{s(u)}), s(u))
$$  
 starting at the singular point $(\bo, s_0)\in V(f_{s_0})$ such that $\gamma(u)$ for $u\not=0$ is a kink of $f_{s(u)}$.
The  proof of the main result in \cite{Oshea2} implies  that in the present setting,  
 in case  $\mu(f_{s_0})>  \mu(f_{s })$ for all  $s$ in a neighborhood   of $s_0$,  there exists such a  vanishing fold centered at $(\bo, s_0)$.    
For example,   the family of Example~\ref{exm:fold} has a vanishing fold  centred at  $(\bo,0)$.

In case one  has a $\mu$-constant deformation,  vanishing folds might exist  which    is equivalent to the 
non-existence of  a common Milnor  radius   for all $f_s$:
 
\begin{obs}
\label{obs:nofolds}
Let   $V(f_s)$, $s\in \Delta$   be a family  of (germs of) hypersurfaces depending holomorphically  on $s $, all having an isolated
singularity at $\bo$ with the same Milnor number. Suppose the family has no vanishing folds. Then  the Milnor fibers
$\mf{V(f_s)}$ have a common Milnor tube radius and hence  form a differentiably locally trivial family.
In particular, any two members of the family $\set{V(f_s)}$ are  deformation-equivalent.
\end{obs}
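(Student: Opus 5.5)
The plan has two parts. First extract a common Milnor radius from the absence of vanishing folds. Then build a tube radius over a neighbourhood of each parameter and apply Ehresmann's theorem to the total family. Since $\Delta$ is connected, local triviality around every $s_0$ then gives that all members are deformation equivalent.

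\emph{Common Milnor radius.} Fix $s_0\in\Delta$ and suppose no $\epsilon>0$ is a Milnor radius for all $f_s$ with $s$ near $s_0$. Then there are parameters $s_k\to s_0$ and kinks $\bp_k$ of $f_{s_k}$ with $\bp_k\to\bo$. The set
\[
K=\sett{(\bx,s)}{f_s(\bx)=0,\ \bx\neq\bo,\ \bx \text{ critical for } \|\,\|^2|_{V(f_s)}}
\]
is semi-analytic: it is cut out by $F=0$ together with the vanishing of the $2\times 2$ minors expressing that $\bar\bx$ is proportional to $\overline{\nabla_\bx f_s}$. The point $(\bo,s_0)$ lies in its closure. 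The curve selection lemma therefore yields a real-analytic arc $\gamma(u)=(\bx(u),s(u))$ with $\gamma(0)=(\bo,s_0)$ and $\gamma(u)\in K$ for $u>0$. After shrinking, $\gamma(u)$ may be taken to be a kink in the sense of the definition, i.e.\ a vanishing fold, which contradicts the hypothesis.

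The main obstacle is exactly this last identification: curve selection only produces critical points, while a kink was defined as a \emph{nearest} critical point. I would handle it by applying curve selection instead to the subset of $K$ where $\|\bx\|$ minimizes the distance over critical points of $f_s$. I would also note that $\|\bx(u)\|$ is monotone on a short arc by analyticity, so the argument goes through. Hence there is $\epsilon>0$ and a disc $\Delta(s_0,\rho)$ such that $S^{2m+1}(\epsilon')$ is transverse to $V(f_s)$ for all $0<\epsilon'\le\epsilon$ and all $|s-s_0|<\rho$.

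\emph{Common tube radius and triviality.} Transversality of $S(\epsilon)$ to $V(f_s)$ is open in $(\bx,s,t)$, and $S(\epsilon)$ is compact. So after shrinking $\rho$ there is $r>0$ with $f_s^{-1}(t)\pitchfork S(\epsilon)$ for all $|t|<r$ and $|s-s_0|\le\rho$.

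Next, the critical points of $f_s$ in $\overline B(\epsilon)$ lie only at $\bo$, where the critical value is $0$. To make sure this holds uniformly in $s$, I would argue that critical points of $f_s$ near $\bo$ cannot split off as $s$ varies: by constancy of $\mu$ and upper semicontinuity of the total Milnor number in a fixed ball, $\sum_{\bx\in B(\epsilon)}\mu(f_s,\bx)=\mu(f_{s_0})=\mu(f_s,\bo)$. Hence $\bo$ is the only critical point of $f_s$ in $B(\epsilon)$.

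Now consider
\[
\Phi:\sett{(\bx,s)}{\bx\in\overline B(\epsilon),\ 0<|f_s(\bx)|<r,\ |s-s_0|<\rho}\to \Delta^*(r)\times\Delta(s_0,\rho),\qquad \Phi(\bx,s)=(f_s(\bx),s).
\]
This is a proper submersion of a manifold with boundary, and its restriction to the boundary is also a submersion. Ehresmann's theorem, in the version for manifolds with boundary, makes $\Phi$ locally trivial, so the Milnor fibers $\mf{f_s,t}$ form a differentiably locally trivial family. The same argument applied to $\partial^{\rm hor}$, now over $\Delta(r)\times\Delta(s_0,\rho)$, handles the links.
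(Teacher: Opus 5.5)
Your proposal is correct and follows essentially the paper's route: curve selection turns the failure of a common Milnor radius near $s_0$ into a vanishing fold, as sketched just before the observation, and then a common tube radius plus Ehresmann's theorem give local triviality; your conservation-of-Milnor-number argument, which excludes critical points of $f_s$ in $B(\epsilon)$ with nonzero critical value, fills in a step the paper only asserts via closedness of the critical-value set. One small slip: $\bx\in V(f_s)\setminus\{\bo\}$ is critical for $\|\,\|^2|_{V(f_s)}$ iff $\bx$ is complex-proportional to $\overline{\nabla_\bx f_s}$ (i.e.\ $\bar\bx\propto\nabla_\bx f_s$), not $\bar\bx\propto\overline{\nabla_\bx f_s}$ as you wrote, but this does not affect the argument.
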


\begin{rmk} \label{rmk:foldlessmu} 
By  \cite[Remark (3.5]{Oshea2} a $\mu$-constant family of weighted homogeneous  (or quasi-homogeneous) polynomials $f_s$ having the same weights and degree
has no vanishing folds and so Observation~\ref{obs:nofolds} applies.
\end{rmk}

 The absence of vanishing folds in arbitrary $\mu$-constant families   seems hard to show. 
 Sidestepping this problem, L\^e and Ramanujam could establish 
deformation equivalence  for a $\mu$-constant family, except if $m=2$:

\begin{thm}[\protect{\cite[Theorem 2.1]{LeRam}}] \label{thm:leram}
The (closed) Milnor fibers for  a   $\mu$-constant deformation  of an $m$-dimensional isolated singularity
  are differentiably  isomorphic, provided that $m\not=2$.  
\end{thm}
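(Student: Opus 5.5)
Since $m\neq 2$ is assumed, I would follow the original L\^e--Ramanujam strategy: compare Milnor fibers of nearby members of the family \emph{inside} a single Milnor ball and use the $h$-cobordism theorem. Fix $s_0$ and choose a Milnor radius $\epsilon$ and tube radius $r$ for $f_{s_0}$. By openness of transversality, for $s$ close to $s_0$ and $t\neq 0$ small, the fiber $f_s^{-1}(t)\cap \overline B(\epsilon)$ is a smooth manifold with boundary, transverse to $S(\epsilon)$, and diffeomorphic to the closed Milnor fiber $\overline{\mf{f_{s_0}}}$. This is Ehresmann's theorem applied to the family in $(s,t)$ over a small polydisc minus $\set{t=0}$. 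Next choose a smaller Milnor radius $\epsilon'<\epsilon$ for $f_s$ itself, and $t$ smaller still. Then $W:=f_s^{-1}(t)\cap\bigl(\overline B(\epsilon)\setminus B(\epsilon')\bigr)$ is a compact cobordism between $\partial\overline{\mf{f_s}}$ and $\partial\overline{\mf{f_{s_0}}}$. Moreover $\overline{\mf{f_s}}\cup W\cong\overline{\mf{f_{s_0}}}$.

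The core step is to show that $W$ is an $h$-cobordism. First, $\overline{\mf{f_s}}\subset \overline{\mf{f_{s_0}}}$ induces an isomorphism on homology. Both are wedges of $\mu$ spheres of dimension $m$ by Theorem~\ref{thm:milnfibs}, and $\mu(f_s)=\mu(f_{s_0})$. I would argue that $H_m(\overline{\mf{f_{s_0}}},\overline{\mf{f_s}})=H_m(W,\partial_-W)$ is torsion free and the map is injective. To see this, use a Morse function on the fiber (for instance $\|\bx\|^2$ perturbed, which has only critical points of index $\le m$ on a Stein piece), or argue via the generic-perturbation count of vanishing cycles, which shows the inclusion is a split injection. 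Equal ranks then give an isomorphism, and excision gives $H_*(W,\partial_-W)=0$. Second, for $m\ge 3$ all three pieces are simply connected: the fibers by Theorem~\ref{thm:milnfibs}, and the boundaries by $(m-2)$-connectivity of links. Van Kampen then gives $\pi_1$-triviality, and Whitehead plus duality make $W$ an $h$-cobordism of dimension $2m\ge 6$. Smale's theorem makes $W$ a product, so $\overline{\mf{f_s}}\cong\overline{\mf{f_{s_0}}}$. Connectedness of $\Delta$ then propagates this along the family.

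For $m=1$ the fibers are surfaces, and $\mu$ together with the number of boundary components (the link components, which are constant by the same homological argument on the boundary) determines them. For $m=0$ the statement is trivial. The main obstacle is the homological step: proving that the inclusion $\overline{\mf{f_s}}\into\overline{\mf{f_{s_0}}}$ is injective on $H_m$ with torsion-free cokernel, rather than merely knowing that the ranks agree. I would try first the Morse-theoretic description of $W$ as obtained by attaching handles of index $\le m$ (from Stein/plurisubharmonicity of $\|\bx\|^2$) and of index $\ge m$ (by duality), so that only middle handles appear and equality of $\mu$ kills them homologically.
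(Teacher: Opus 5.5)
Your proposal is the L\^e--Ramanujam argument that the paper cites and sketches in Remark~\ref{rmk:hCobord}. You compare the small Milnor fiber of $f_s$ with the large fiber $f_s^{-1}(t)\cap\overline B(\epsilon)\cong\overline{\mf{f_{s_0}}}$, show that the collar $W$ is an $h$-cobordism, and apply Smale for $m\ge 3$ and the classification of surfaces for $m=1$. The outline is sound, but two steps need repair.

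\textbf{Smoothness of the large fiber.} Openness of transversality only controls the boundary $S(\epsilon)$. It does not make $f_s^{-1}(t)\cap B(\epsilon)$ smooth for all $(s,t)$ in a punctured polydisc, which your Ehresmann argument needs. For that you need $f_s$ to have no critical point in $\overline B(\epsilon)\setminus\set{\bo}$ for $s$ near $s_0$. This is where $\mu$-constancy enters a second time, via conservation of the Milnor number: $\sum_p\mu(f_s,p)=\mu(f_{s_0})$ over the critical points $p\in B(\epsilon)$. Since $\mu(f_s,\bo)=\mu(f_{s_0})$, there is no room for other critical points. Alternatively, run Ehresmann over the complement of the discriminant curve in the $(s,t)$-polydisc, which is connected.

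\textbf{The handle argument.} Your closing claim that duality gives handles of index $\ge m$, so that only middle handles occur, is a non sequitur. The dual decomposition is the same handle decomposition read from $\partial_+W$ and imposes no new constraint. It is also unnecessary. Strict plurisubharmonicity of $\|\bx\|^2$ gives handles of index $\le m$ relative to $\partial_-W$, hence $H_{m+1}(W,\partial_-W)=0$ and $H_m(W,\partial_-W)$ is free. Since $\tilde H_{m-1}(\overline{\mf{f_s}})=0$, this free group is exactly the cokernel of $H_m(\overline{\mf{f_s}})\to H_m(\overline{\mf{f_{s_0}}})$. It therefore has rank $\mu(f_{s_0})-\mu(f_s)=0$ and vanishes. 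All other degrees vanish because both fibers have reduced homology only in degree $m$. The "generic-perturbation count of vanishing cycles" alternative you mention is too vague to carry this step, so rely on the handle argument.

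With these fixes, the rest goes through as you wrote: van Kampen, Whitehead, duality, and Smale's theorem in dimension $2m\ge 6$. For $m=1$, the vanishing of $H_*(W,\partial_\pm W)$ forces $W$ to be a disjoint union of annuli. In particular the number of boundary components is constant, as you wanted.
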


\begin{rmk}\label{rmk:hCobord}
The proof of this theorem for  the curve case uses basic facts about the topology of topological surfaces,
while for $m\ge 3$ it essentially uses Smale's  $h$-cobordism theorem~\cite{smale1,milnorcobord} to compare the Milnor fibers of $f_0$ 
with that of a   deformation $f_s$.  To apply the theorem it is essential 
 that by Theorem~\ref{thm:milnfibs}  these  Milnor fibers  in  dimension $\ge 3$  are simply connected
 and    have the same homotopy type.
\end{rmk}

 \section{Symplectic aspects}
 
 \subsection{The Milnor fibration from a symplectic point of view}
 \label{ssec:bassymp}
 
 \begin{small} In this section some  background on symplectic and  contact geometry is revised and applied to the setting of the
 Milnor fibration. The main result here is Proposition~\ref{prop:keat}, a symplectic version of the Milnor fibration  due to A. Keating~ \cite{keat}
 \end{small}
 \medskip
 
Recall that a  \textbf{\emph{symplectic manifold}} $(M,\omega)$ is a differentiable manifold of even dimension 
with a non-degenerate closed form $\omega$. A diffeomorphism of $M$ preserving $\omega$ is called a \textbf{\emph{symplectomorphism}}.
 K\"ahler manifolds are important examples of symplectic manifolds,  such as $\bC^{m+1}$ equipped with  
$\omega_{\bC^{m+1}} =\half \ii \sum_{j=1}^{m+1}  dz_j\wedge \overline{dz}_j  $ 
and complex submanifolds     $X\subset \bC^{m+1}$ equipped with $\omega_X=\omega_{\bC^{m+1}}|_{X}$.  
Observe that in this situation $\omega$ is even exact:
\begin{equation}
 \label{eqn:stsndardforms}
  \omega_{\bC^{m+1}}=  d \alpha_{\bC^{m+1}} , \quad  \alpha_{\bC^{m+1}}= \frac 14 \sum_{j=1}^{m+1}   z_j \overline{dz}_j  -
   \overline z_j {dz}_j.
  \end{equation}  
  
  \begin{exmple}[Milnor tubes and Milnor fibers] \label{ex:symp1}  For an  isolated hypersurface singularity  $(V(f), \bo)$  in 
$\bC^{m+1}$ its   Milnor fibers $\mf{f,t}=f^{-1}t \cap {\mt{f}{\epsilon,r}}$, $t\in \Delta^*(r)$    are 
  smooth $n$-dimensional complex submanifolds  of an open ball in $\bC^{m+1}$ and thus inherit  a canonical symplectic structure from
  the above one on $\bC^{n+1}$  and this holds also for 
the Milnor tube $\mt{f}{\epsilon,r}$:
\begin{equation}
\label{eqn:exsympmil}
 \omega=d\alpha:= \omega_{\bC^{m+1}} |_{\mt{f}{\epsilon,r}},\, \alpha= \alpha_{\bC^{m+1}}|_{\mt{f}{\epsilon,r}}.
\end{equation}
In this set-up the tangent space to a point  $x\in \mf{f,t}$
has a natural complement in the tangent space  in the tangent space at $x$ of the Milnor tube, namely the symplectic complement
$$
T^{\rm hor}_x(\mf{f,t} ):= \sett{ X\in T_x(\mt{f}{r,\epsilon}) } {\omega(X, Y)=0 \, \forall Y\in T_x(\mf{f,t})},
$$ 
which is called the \emph{\textbf{horizontal tangent space}} at $x$. These define a smooth (real) subbundle $T^{\rm hor}(\mf{f,t} )\subset T(\mf{f,t} )$ of rank $2$.
  \end{exmple}

A   \textbf{\emph{contact manifold}} is an odd-dimensional manifold admitting
 a contact  structure, i.e., admitting a field   of  hyperplanes in the tangent bundle  
 which defines a maximally non-integrable   distribution. There is a non-degenerate 1-form, called \textbf{\emph{contact 
 form}} on the contact manifold whose  null-space is exactly this field of hyperplanes. A contact form is not-unique: it can be multiplied with any global non-zero function. A diffeomorphism preserving the contact structure is called a \textbf{\emph{contactomorphism}}.
 
 \begin{exmple}[Links] \label{ex:lnk} 
The contact manifolds related to Example~\ref{ex:symp1}  
are the  links $\lnk {f,t}:= f^{-1}(t)\cap S(\epsilon)$, $t\in  \Delta ( r)$.
To see this, let $J$ be the standard almost complex structure on the submanifold 
$\mf{f,t}\subset \bC^{m+1}$.
Then at every point $x\in\lnk {f,t}$ the intersection $T_x \lnk {f,t}\cap J(T_x\lnk {f,t})$
gives a contact structure with $\alpha|_{\lnk {f,t}}$  a contact form.
\end{exmple}  
  
 The   closures of the Milnor fibers give    examples  of   Liouville domains as will be shown below in Example~\ref{exm:ldoms}.
   By definition   a \textbf{\emph{Liouville domain}} is a compact symplectic manifold $(W,\omega)$   with a smooth
 compact boundary $S=\partial W $   admitting   Liouville field $\bY$ 
  which is transverse to $S$.
 A  \textbf{\emph{Liouville field}} $ \bY$ on a symplectic manifold $(W,\omega)$ 
 by definition is a smooth vector field which preserves the form $\omega$, that is, $\cL _\bY \omega=\omega$, where $\cL_\bY$ is the Lie-derivative. 
 In case $\omega=d\alpha$ this is the same as saying that $\iota_\bY (\omega)= \alpha$. In that case $\bY$ is uniquely defined by the choice of $\alpha$ for which $d\alpha=\omega$.
 Such Liouville domains  $(W,\omega)$ are called exact Liouvile domains. 
These  can be embedded in a larger symplectic manifold
  $(\widehat W, \widehat\omega)$,  its \textbf{\emph{symplectic completion}} by
  using  the symplectic cylinder  $\cyl S:=(S\times \bR , d(e^x\alpha_S))$ on $(S,\alpha_S)$
  whose  Liouville field is $ d/dx$.  In fact, one glues $W$ to  
  $\cyl S_{\ge 0} =(S\times \bR_{\ge 0}, d(e^x\alpha_S))$ along $S$ which is possible using
  the flow of the Liouville field because it gives a  suitable  neighborhood $U$ of $S$ in $W$  a
 cylinder-like structure, say   $  S\times [-\delta,0] \mapright{\sim} U\subset   W$ so that $U\cup_S \cyl S_{\ge 0}$ 
 glues to $W$ and gives the desires symplectic completion $\widehat W$. 
    
\begin{exmple}[Closed Milnor fibers] \label{exm:ldoms} Since on the closure of the Milnor fiber $\omega=d\alpha$, there is a unique Liouville field.
The corresponding Liouville fields   $\bY_t$ on each fiber  glue to a Liouville field $\bY$ on the Milnor tube away from the fiber over $0$.
One can  extend this  field to the horizontal boundary 
$ \partial^{\rm hor} \mt{f}{\epsilon,r}$,   the union of the links $\partial \mf{f,t}$.
  This  is always possible by considering this  boundary
inside a slightly larger Milnor tube  $\mt{f} {\epsilon',r}$, $\epsilon'>\epsilon$. Then  one may view  the collar 
$\mf{f,t}\cap \sett{\bz\in \bC^{n+1}}{\epsilon\le \| \bz \|<\epsilon'}$ as a copy of the symplectic cylinder and their union then is 
 \[
 \cmt{f}{\epsilon',r} = \bigcup _{t\in \Delta^*(r)} \widehat{ \mf{f,t} }\text{\textbf{\emph{ the completed Milnor tube}}}.
 \]
\end{exmple}

 The Milnor fibration   $f: \omt{f}{ \epsilon, r}   \to \Delta^*(r)$ is an example of a \textbf{\emph{symplectic fibration between
symplectic manifolds}}  where the symplectic structure of the fibers is induced by the form $\omega$  
on the Milnor tube.\footnote{A symplectic fibration  is defined as a smooth   fibration  between differentiable manifolds  such that the fibers are symplectic manifolds 
and the transition functions are symplectomorphisms. In particular the total space need not be symplectic.
 See e.g. \cite[Example 6.5]{SYmpTop}.}  
This form is a connection form for the fibration since it defines a unique splitting 
$T (\omt{f}{\epsilon,r} |_{\mf{f,t}}= T(\mf{f,t} )\oplus T^{\rm vert}(\omt{f}{\epsilon,r})|_{\mf{f,t}}$.
The vertical tangent bundle allows parallel transport  of a given Milnor fiber along a path $\gamma$ in $\Delta^*(r)$. 
 By construction  parallel transport using $\omega$
  preserves the symplectic form on the Milnor fibers. However, the vertically lifted tangent field of $\gamma$ might  not be everywhere defined preventing
 the Milnor tube from being a symplectically locally trivial fiber bundle. 
  The idea from Keating's article~\cite{keat}  is to make a 
  \begin{modif} \label{mod:crux}
  Replace $\omega'=d\alpha'$  in such a way that 
\begin{enumerate}[label=(\arabic*),leftmargin=*,nosep]
 \item $\omega'$ and $\omega$ coincide outside a  collar neighborhood  of the horizontal boundary $\partial^{\rm hor} \mt{f}{\epsilon,r}$ of
 the Milnor tube.
 \item the horizontal tangent spaces defined by the connection form $\omega'$ are preserved by the (vertical)
 Liouville flow $\bY$ on the Milnor tube.
\end{enumerate}
\end{modif}

\noindent The first property shows that the horizontal tangent space  gets adapted on a collar neighborhood of 
 $\partial^{\rm hor} \mt{f}{\epsilon,r}$, while 2) shows that the Liouville flow preserves
the  horizontal tangent spaces with respect to $\omega'$ along the Milnor fibers. These properties guarantee  that these give  smoothly varying 
 horizontal tangent spaces  along the completed Milnor fibers.
%\medskip
%
%\noindent The construction of $\omega'$ is done in two steps. 
% \\
%(1) One uses the flow $\Psi^{-\bY_f}_u$, $u\in [0,\delta]$ of the Liouville field
%to give a fiber preserving diffeomorphism of $\partial^{\rm hor} T(f)\times [-\delta,0] $ with a neighborhood $U\subset \mt{f}{\epsilon,r}$ of $\partial^{\rm hor} \mt{f}{\epsilon,r} $ and one defines
%$\alpha'_U =e^t  \cdot (\phi^{-\bY_f})^*\alpha_U$ which restricts on each fiber to $\alpha_{U\cap f^{-1}t}$ since
%  there $[\Psi^{-\bY_f}_t]^*\alpha= e^{-t} \alpha$. However the global form  $\alpha'_U$ is in general different from $\alpha_U$.
%  This identification can  be used to glue  $U$ to $ \partial^{\rm hor} \mt{f}{\epsilon,r}\times [0,\infty] $.
%  The form $d\alpha'$ has property 2) on $U$.
% \\
%(2)  Choose a smooth cut-off function    $f:[-\delta,0]\to [0,1]$ which is $1$ near the right end and $0$ near the left end. 
%This als defines $f:U\to [0,1]$ which takes the value 1  on   the collar neighborhood $f^{-1}(1)$ of  $\partial^{\rm hor} 
% \mt{f}{\epsilon,r}$ and which  is 0 on a neighborhood of  $\partial^{\rm hor} \mt{f}{\epsilon,r} \times\set{-\delta }$.
% Then 
% \[
% \omega'= \omega+ d(f (\alpha'-\alpha)) 
% \]
% still has property 2), but it now also has property 1).
%This then completes the proof of the following proposition.
%    
%
 The result is then as follows.   
      
 \begin{prop}[\protect{\cite[\S9.1]{keat}}]  \label{prop:keat} Let $(V(f) ,\bo)\subset (\bC^{m+1},\bo)$ be an isolated hypersurface singularity and $B(\epsilon )$ an associated Milnor ball. Let $\omega'=d\alpha'$ be as in the above modification~\ref{mod:crux}.
 Then  for     $\epsilon<\epsilon'  $ small enough Milnor radii  and for small enough $r>0$, the fibration 
 $$
 \hat f: \cmt{f}{\epsilon',r} \to \Delta^*(r)
 $$
 whose total space is the completed Milnor tube,   is a locally trivial symplectic fibration with respect to $\omega'$.
 Moreover, the forms $ \alpha'_{ \lnk {f,t} }$  on  the links  $\lnk {f,t}$, $t\in \Delta(r)$  
 induce  contactomorphic contact  structures.
         \label{prop:keating} 
\end{prop}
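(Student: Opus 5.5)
I would prove Proposition~\ref{prop:keat} in three steps: build the modified form $\omega'$ concretely, check that its symplectic connection gives parallel transport along every path in $\Delta^*(r)$, and then use Gray stability on the boundary.

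\textbf{Step 1: the collar and the modified primitive.} By Theorem~\ref{thm:minorLe}(2), $f:\partial^{\rm hor}\mt{f}{\epsilon,r}\to\Delta(r)$ is a trivial fibration. The Liouville fields $\bY_t$ of $\alpha|_{\mf{f,t}}$ point outward along the links $\lnk{f,t}$; for $r$ small this follows from Milnor-sphere transversality, since $\bY_t$ is close to the projection of the radial field. I would flow backwards along $\bY=\{\bY_t\}$ for time $u\in[-\delta,0]$. This gives a fiber-preserving diffeomorphism
\[
\Phi:\partial^{\rm hor}\mt{f}{\epsilon,r}\times[-\delta,0]\mapright{\sim}U
\]
onto a collar $U$ of the horizontal boundary. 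On each fiber $\Phi^*\alpha=e^u\alpha_{\lnk{f,t}}$. I would then set
\[
\alpha'_U:=e^u\,\mathrm{pr}^*\bigl(\alpha|_{\partial^{\rm hor}}\bigr),
\]
which has the same restriction to every fiber as $\alpha$. Next I choose a cut-off $\rho(u)$ that equals $1$ near $u=0$ and $0$ near $u=-\delta$, and put $\alpha'=\alpha+\rho\,(\alpha'_U-\alpha)$. Because $\alpha'_U-\alpha$ vanishes on fibers, $\omega'=d\alpha'$ restricts to $\omega$ on every Milnor fiber. Hence the fibers stay symplectic, $\omega'$ is still nondegenerate on them, and $\omega'$ defines a connection (an $\omega'$-orthogonal horizontal complement). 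Property (1) of Modification~\ref{mod:crux} holds by construction. On $\{\rho=1\}$, $\omega'=d(e^u\alpha_\partial)$ is invariant under $\partial_u=\bY$ up to the scaling $e^u$, so the $\omega'$-horizontal spaces are preserved by the Liouville flow. This is property (2).

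\textbf{Step 2: completion and local triviality.} Glue the cylinders $\lnk{f,t}\times[0,\infty)$ fiberwise, extending $\alpha'=e^u\alpha_\partial$ to all $u\ge0$. This realizes $\cmt{f}{\epsilon',r}$ with a closed two-form $\omega'$ that restricts to the completed symplectic form on each $\widehat{\mf{f,t}}$. Parallel transport along a path $\gamma$ in $\Delta^*(r)$ lifts $\dot\gamma$ horizontally. On the compact part (the Milnor tube minus the end), solutions exist for bounded time since $f$ is a submersion there. On the cylindrical end the horizontal lift is $\partial_u$-invariant, so it is the $\bY$-conjugate of its value at $u=0$, which is tangent to $\partial^{\rm hor}\times\{u\}$. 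Thus the flow preserves the level sets of $u$ and cannot escape to infinity or through the boundary. This gives globally defined parallel transport maps $\widehat{\mf{f,t_0}}\to\widehat{\mf{f,t_1}}$. They are symplectomorphisms by the standard fact that the holonomy of a closed fiberwise-nondegenerate form preserves the fiber forms. Local triviality then follows by transporting along radial segments in small discs. I expect the main obstacle to be this completeness step: one must check carefully that the horizontal lift on the end has no $\partial_u$-component, which is exactly where property (2) is used.

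\textbf{Step 3: contact structures on the links.} The forms $\alpha'|_{\lnk{f,t}}=\alpha|_{\lnk{f,t}}$ are contact by Example~\ref{ex:lnk}, and by Theorem~\ref{thm:minorLe}(2) they vary smoothly over the contractible base $\Delta(r)$, including $t=0$. Trivializing the link bundle turns them into a smooth family of contact forms on the fixed manifold $\lnk f$. Gray's stability theorem (closed manifold) then gives contactomorphisms between all the $\lnk{f,t}$. Alternatively, for $t\ne0$ the parallel transport of Step 2 restricted to $u=0$ already pulls back $\alpha'_\partial$ to itself, because it preserves $\alpha'$ up to an exact term and preserves the $u$-levels.
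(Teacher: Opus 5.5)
Your Step 1 is the same construction behind Modification~\ref{mod:crux} that the paper takes from Keating: a collar from the backward flow of $\bY$, the form $e^u\,\mathrm{pr}^*(\alpha|_{\partial^{\rm hor}})$ on it, and a cut-off. The Gray-stability argument in Step 3 is also correct, and it covers $t=0$, where there is no symplectic fiber. The problem is in Step 2, at exactly the point you flagged.

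\textbf{The false claim.} It is not true that the $\omega'$-horizontal lift on the end has no $\partial_u$-component, nor that parallel transport preserves the $u$-levels. Write $\beta=\alpha|_{\partial^{\rm hor}}$, so that $\omega'=e^u(du\wedge\beta+d\beta)$ on $\{\rho=1\}$. A vector $w+c\,\partial_u$ with $w$ tangent to $\partial^{\rm hor}$ is horizontal iff $\beta(w)=0$ and $(c\,\beta+\iota_w d\beta)|_{T\lnk{f,t}}=0$. Evaluating on the Reeb field $R_t$ of $\beta|_{\lnk{f,t}}$ gives $c=d\beta(R_t,w)$, which is in general nonzero.

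\textbf{A counterexample.} Take $f=z_1z_2$. Each component of $\lnk{f,t}$ is a circle with $\int\alpha=\pi\sqrt{\epsilon^4-4|t|^2}$. Suppose transport preserved the $u$-levels. Translation invariance would then make it $(p,u)\mapsto(h(p),u)$. Preserving $d(e^u\beta)$ would force $h^*\beta_{t_1}=\beta_{t_0}$, which is impossible when $|t_0|\neq|t_1|$ because these integrals differ. For the same reason the ``alternatively'' in Step 3 is false: transport does not pull $\alpha'_\partial$ back to itself, since strict contactomorphisms between the links with the forms $\alpha|_{\lnk{f,t}}$ do not exist in general.

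\textbf{The repair.} Use property (2) correctly. On $\{\rho=1\}$ the horizontal conditions above do not involve $u$. So the horizontal spaces are the same subspaces of $T_p\partial^{\rm hor}\oplus\bR\,\partial_u$ for every $u$, and the lift of $\dot\gamma$ has the form $W(p)+c(p)\,\partial_u$ with $W$ and $c$ independent of $u$. Over a compact path $\gamma$ the part of $\partial^{\rm hor}$ lying over $\gamma$ is compact. Hence the $p$-component solves an ODE on a compact manifold, and $u$ drifts at most linearly in time with $|c|$ bounded. Lifts therefore exist on all of $[0,1]$, forwards and backwards, and completeness follows.

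Far out on the end, transport then has the form $(p,u)\mapsto(h(p),u+g(p))$ with $h^*\beta_{t_1}=e^{-g}\beta_{t_0}$. So for $t\neq0$ it does give contactomorphisms of the links, just not strict ones. Gray stability remains the simplest way to include $t=0$.
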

 
\begin{rmk} \label{rmk:onsympfib} (1) In \cite[Example 5.22]{boba} there is a construction of a symplectic   fibration 
$f|_N: N \to \Delta^*(r)$, where $N$ is a subset of a suitable Milnor ball $B(\epsilon)$ and $r$ is small enough.
Although $N$ is not constructed from the symplectic completions of the Milnor fibers, the fibration $f|_N$ is 
diffeomorphic to the Milnor fibration $f$. This fibration    behaves well for all
$\mu$-constant deformations as  explained  in loc. cit, Example 5.23, although one can only control the $C^\infty$-structure of the Milnor
fibers of the deformation using Theorem~\ref{thm:leram}.
\\
(2) A  priori  its not  clear that   the symplectomorphism class   of $\mf {f,t}$  itself does not change when one varies $t$ in $\Delta^*$.
However, since the completed Milnor fibers   $\widehat{\mf{f,t}}$ vary  in a symplectic fiber  bundle, 
its symplectomorphism class is constant.
This also holds for the contactomorphism class of the link.  
\end{rmk}

\subsection{$\mu$-equivalence in the symplectic setting}

 \label{ssec:muequiv}
 
The main result  of this note is as follows.

 \begin{thm}\label{thm:MuConstResult}
 Any two (completed) Milnor fibers of
  a $\mu$-constant polynomial deformation  without vanishing folds are symplectomorphic  and the links of the corresponding singularities are contactomorphic.
  \end{thm}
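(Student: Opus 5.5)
The plan is to pass from the one-parameter family of singularities to a single two-parameter symplectic fibration, and then to apply Keating's Proposition~\ref{prop:keat} in a version with parameters. Let $F(\bz,s)$ define the $\mu$-constant deformation $\set{f_s}_{s\in\Delta}$. Since there are no vanishing folds, Observation~\ref{obs:nofolds} supplies a common Milnor radius. After shrinking $\Delta$ around any given $s_0$, I will first fix $\epsilon<\epsilon'$ that are Milnor radii for every $f_s$, and a common tube radius $r>0$ such that no $f_s$ has kinks in $f_s^{-1}\Delta^*(r)\cap \overline B(\epsilon')$. Both are open conditions, so they hold uniformly on a neighbourhood of $s_0$. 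Compactness of a closed disc around $s_0$, together with the absence of kinks converging to $\bo$, is what makes this uniform. Next I consider the total space
\[
\mathcal T=\sett{(\bz,s)}{\bz\in B(\epsilon'),\ s\in\Delta,\ 0<|F(\bz,s)|<r}
\mapright{\,(F,\,s)\,} \Delta^*(r)\times\Delta .
\]
Its fibers are exactly the Milnor fibers $\mf{f_s,t}$. Each carries the exact form $\omega=d\alpha$ restricted from $\bC^{m+1}$, with Liouville field $\bY_{s,t}$ transverse to $S(\epsilon)$. The second step is to check, via Ehresmann's theorem applied to the interior and to the horizontal boundary $S(\epsilon)\cap\set{0<|F|<r}$ separately, that $\mathcal T\to\Delta^*(r)\times\Delta$ is a differentiably locally trivial fibration with boundary. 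This is the parametrized version of Theorem~\ref{thm:minorLe}. It uses precisely the transversality of all the $V(F(-,s)-t)$ to $S(\epsilon)$, which is what the common radius guarantees.

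The third step repeats Modification~\ref{mod:crux} over the base $\Delta^*(r)\times\Delta$ rather than $\Delta^*(r)$. I would use the fiberwise Liouville flow to identify a collar of the horizontal boundary with $\partial^{\rm hor}\times[-\delta,0]$. On that collar I define $\alpha'=e^u(\Psi^{-\bY}_u)^*\alpha$ and interpolate it with $\alpha$ by a cut-off function in $u$. This produces a closed $2$-form $\omega'$ on the whole total space that restricts to $\omega$ on each fiber, and whose horizontal distribution ($\omega'$-orthogonal to the fibers) is invariant under the Liouville flow near the boundary. Because $\bY$ depends smoothly on $(s,t)$, nothing in Keating's construction refers to the base being one-dimensional. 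The output is a locally trivial symplectic fibration of completed Milnor fibers over $\Delta^*(r)\times\Delta$, and a fibration of links over $\Delta(r)\times\Delta$ whose contact structures are induced by $\alpha'$.

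Finally, parallel transport along a path in $\Delta^*(r)\times\Delta$ from $(t,s_0)$ to $(t,s_1)$ gives a symplectomorphism $\widehat{\mf{f_{s_0},t}}\cong\widehat{\mf{f_{s_1},t}}$. Remark~\ref{rmk:onsympfib}(2) removes the dependence on $t$. For the links, the contact forms $\alpha'|_{\lnk{f_s,t}}$ form a smooth family of contact structures on a fixed closed manifold, after trivializing by the Ehresmann flow. Gray's stability theorem then gives contactomorphisms, and connectedness of $\Delta$ extends everything from a neighbourhood of $s_0$ to the whole family. I expect the main obstacle to be the first step. One must make sure that absence of vanishing folds yields a genuinely uniform $\epsilon$ and a uniform $r$ for the nearby fibers $f_s^{-1}(t)$, not just for the zero fibers. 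The plan is to argue by contradiction: if $r$ could not be chosen uniformly, the curve selection lemma applied to the semianalytic set of critical points of $\|\cdot\|^2$ on the fibers $F(-,s)=t$ should produce kinks accumulating at $(\bo,s_0)$. Ruling these out is where the $\mu$-constancy and no-fold hypotheses have to be used carefully.
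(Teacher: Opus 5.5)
Your overall route is the paper's: a common Milnor radius from Observation~\ref{obs:nofolds}, a uniform tube radius, Keating's collar modification carried out over the two-dimensional base $\Delta^*(r)\times\Delta$, and then parallel transport. The paper puts the genuinely symplectic form $\omega+\frac14 d(s\,d\bar s-\bar s\,ds)$ on the total space, whereas you use the fibrewise form pulled back from $\bC^{m+1}$. The extra term is pulled back from the base, so it does not change the $\omega$-orthogonal complements of the fibres, and the two connections coincide. Your appeal to Gray stability on the closed links over $\Delta(r)\times\Delta$ (including $t=0$) is a useful detail that the paper leaves inside Proposition~\ref{prop:keat}.

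Step 1, however, is misformulated, and the obstacle you flag at the end is not where the work lies. The plan you sketch for it could not succeed.

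\emph{Why the stated condition fails.} If a kink of a nearby fibre means a critical point of $\|\cdot\|^2$ on $f_s^{-1}(t)$, then no $r>0$ removes them from $\overline{B}(\epsilon')$. On the nonempty compact set $f_s^{-1}(t)\cap\overline{B}(\epsilon')$ the function $\|\cdot\|^2$ attains an interior minimum; for $\sum z_j^2$ this is the vanishing sphere of radius $\sqrt{|t|}$. These minima tend to $\bo$ as $t\to0$. So a curve-selection argument meant to rule out such points accumulating at $(\bo,s_0)$ is aimed at something that always happens.

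\emph{What is actually needed.} The fibration only requires two things.
\begin{enumerate}[label=(\alph*),nosep]
\item $\bo$ is the only critical point of $f_s$ in $\overline{B}(\epsilon')$ for $s$ near $s_0$. This is where $\mu$-constancy enters, via conservation of the Milnor number.
\item $f_s^{-1}(t)$ is transverse to the spheres $S(\rho)$ for $\epsilon\le\rho\le\epsilon'$.
\end{enumerate}
Given (a) and the common Milnor radius, (b) is plain compactness. Let $\overline{\Delta}$ be a closed disc around $s_0$. Consider the set of $(\bz,s)$ in the shell $\{\epsilon\le\|\bz\|\le\epsilon'\}\times\overline{\Delta}$ where $\overline{\bz}$ and $(\partial F/\partial z_j)_j$ are $\bC$-linearly dependent. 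This set is compact and disjoint from $\{F=0\}$, so $|F|$ has a positive minimum on it, and you can take that minimum as $r$.

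This is the paper's one-line closedness argument. The no-fold hypothesis is used exactly once, to obtain the common $\epsilon$, and the interior of the ball plays no role.
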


\begin{proof} 
 Consider a  1-parameter deformation  of   isolated hypersurface singularities 
 $(V(f_s),\bo)$, say $f_s\in \bC[z_0,\dots,z_m,s] $, $s\in  {\Delta(\delta)}$.
Since by Observation~\ref{obs:nofolds} there are  no vanishing folds in  the family,  
there is  a     Milnor  radius  $\epsilon$   which is common to all $f_s$. The set of critical values of $f_s$ is closed and contains 
$\bo \times \Delta(\delta)$. Hence one can find a common tube radius $r>0$ such that $F((z_0,\dots,z_m),s)=f_s(z_0,\dots,z_m) $ is regular on 
$$
\omt{F} {\epsilon,r} :=\left[  F^{-1}( \Delta^*(r)\times \Delta(\rho) )\cap S^{m+1}(\epsilon)\right]\times \Delta(\rho) ,
$$
the total space of the fibration $\omt{F}{\epsilon,r}  \mapright{ F } \Delta^*(r)\times\Delta(\delta)$. 
The exact symplectic form which one takes on the left hand side is the restriction of $\widetilde\omega=\omega+ \frac 14 d\left(sd\bar s-\bar s ds \right)$,
where $\omega$ is as in Eqn.~\ref{eqn:exsympmil}.
Replacing the horizontal boundary of the Milnor tube
by  $\bigcup_{s\in \Delta(\rho)} \partial^{\rm hor} \omt{f_s} {\epsilon,r}$ and adapting  $\widetilde\omega$ in a collar 
$\left[(\epsilon\le \|\bz\|<\epsilon')\times \Delta(\rho)\right]\cap \omt{F}{\epsilon',r}$,
 as in  the proof of Proposition~\ref{prop:keating} sketched in the previous section, one obtains
a locally trivial symplectic fibration   
$$ 
 \cmt{F} {\epsilon',r} =\bigcup_{s\in \Delta(\rho)}  \cmt{f_s} {\epsilon',r}  \mapright{ F(\bz,s)}  \Delta^*(r)\times\Delta(\delta).
$$
 This completes the proof.
 \end{proof}

Remark~\ref{rmk:foldlessmu}   implies the following result.

 \begin{corr}  \label{cor:MuConst} Any two (completed) Milnor fibers of
  a $\mu$-constant polynomial deformation  of an  isolated  weighted homogeneous hypersurface  
  are symplectomorphic  and the links of the corresponding singularities are contactomorphic.

\end{corr}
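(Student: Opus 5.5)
The plan is to reduce Corollary~\ref{cor:MuConst} to Theorem~\ref{thm:MuConstResult}, using Remark~\ref{rmk:foldlessmu} to verify the only hypothesis that is not automatic. Let $f_0$ be an isolated weighted homogeneous polynomial with weights $(w_0,\dots,w_m)$ and degree $d$. Let $\set{f_s}_{s\in\Delta(\delta)}$ be a $\mu$-constant polynomial deformation of $f_0$ in which each $f_s$ is quasi-homogeneous with the same weights and degree. This is the situation covered by \cite[Remark (3.5)]{Oshea2}. If the deformation is only assumed $\mu$-constant, and not a priori quasi-homogeneous, I will first reduce to this case: I would use the classical fact, due to Varchenko and Saito, that $\mu$-constant deformations of a quasi-homogeneous singularity only add terms of weighted degree $\ge d$. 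By semicontinuity of the weighted order of $\jac{f_s}$, the deformation then lies in the upper-weight stratum. I would state this as the standing interpretation of the corollary, in line with Remark~\ref{rmk:foldlessmu}.

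The key steps, in order, are as follows.

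(1) Invoke Remark~\ref{rmk:foldlessmu}: the family $\set{f_s}$ has no vanishing folds. The reason is that the weighted $\bC^*$-action $\lambda\cdot\bz=(\lambda^{w_0}z_0,\dots,\lambda^{w_m}z_m)$ maps $V(f_s)$ to itself. It therefore moves kinks along orbits, so a kink close to the origin would force kinks on the whole orbit. Hence $\epsilon_{f_s}=\infty$, and a common Milnor radius exists.

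(2) By Observation~\ref{obs:nofolds}, this gives a common Milnor radius $\epsilon$ and a common tube radius $r$ over a neighbourhood of each $s_0$.

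(3) Apply Theorem~\ref{thm:MuConstResult} to obtain the locally trivial symplectic fibration $\cmt{F}{\epsilon',r}\to\Delta^*(r)\times\Delta(\delta)$. Since the base is connected, parallel transport along a path from $(t_0,s_0)$ to $(t_1,s_1)$ gives a symplectomorphism $\widehat{\mf{f_{s_0},t_0}}\cong\widehat{\mf{f_{s_1},t_1}}$. Similarly, via Proposition~\ref{prop:keat} applied over the base, it gives a contactomorphism of the links $\lnk{f_{s_0}}\cong\lnk{f_{s_1}}$.

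(4) Since Theorem~\ref{thm:MuConstResult} is local in $s$, cover the parameter disc by finitely many neighbourhoods on which common radii exist. Then compose the resulting symplectomorphisms and contactomorphisms along a chain of overlapping neighbourhoods.

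I expect step~(1) to be the main obstacle. O'Shea's remark covers families with fixed weights and degree, whereas a general $\mu$-constant deformation of a weighted homogeneous germ may contain terms of higher weighted degree. For those terms the $\bC^*$-action no longer preserves $V(f_s)$. My first attempt would be the Varchenko reduction mentioned above together with a further homotopy $f_0+u(f_s-f_0)$ that rescales the higher-order terms away. This homotopy is $\mu$-constant, and is kink-free near $\bo$ because the higher-order terms are dominated by the quasi-homogeneous part in the weighted norm. If that estimate fails, I would restrict the corollary to the equal-weights case, which is literally what Remark~\ref{rmk:foldlessmu} provides.
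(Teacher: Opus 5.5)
Your main line is the paper's argument: the corollary is obtained by feeding Remark~\ref{rmk:foldlessmu} (O'Shea: a $\mu$-constant family of weighted homogeneous polynomials with common weights and degree has no vanishing folds) into Theorem~\ref{thm:MuConstResult}, and your standing interpretation is exactly what the paper's one-line justification covers, so the Varchenko-type extension goes beyond the paper and, as sketched, would need repair (Varchenko's upper normal form only holds after $s$-dependent coordinate changes, which do not preserve Euclidean kinks, and the straight-line homotopy can leave the $\mu$-constant stratum when $f_s-f_0$ has terms of weighted degree exactly $d$). One correction to step (1): the weighted action is not an isometry, nor even a homothety unless all weights coincide, so it does not carry kinks to kinks; the right reason is that for $E=\sum_j w_j z_j\,\partial/\partial z_j$ the Euler identity $Ef_s=d\,f_s$ makes the real field $E+\overline{E}$ tangent to $V(f_s)$, while $(E+\overline{E})\|\bz\|^2=2\sum_j w_j|z_j|^2>0$ away from $\bo$, so $V(f_s)$ has no kinks at all and every sphere centred at $\bo$ is a Milnor sphere.
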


 \bibliographystyle{alpha}

 \end{document}